\def\blanco{\square}
\def\negro{\sqbullet}
\def\ead#1{\email{#1}}
\def\xto#1{\xrightarrow{#1}}
\def\^#1{\widehat{#1}}
\def\d{\delta}
\def\G{{\mathcal G}}
\def\B{{\mathcal B}}
\def\H{{\mathcal H}}
\def\A{{\mathcal A}}
\def\X{{\mathcal X}}
\def\C{{\mathcal C}}
	\theoremstyle{plain}
		\newtheorem{teo}{Theorem}
		\newtheorem{prop}[teo]{Proposition}
		\newtheorem{lema}[teo]{Lemma}
	\theoremstyle{definition}
		\newtheorem{defi}[teo]{Definition}
		\newtheorem{example}[teo]{Example}
		\def\Cacti{\mathcal{C}}\def\cact{\mathcal C}
\def\m3{ \raisebox{-8pt}{\includegraphics[scale=1.2]{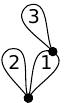}} - \raisebox{-8pt}{\includegraphics[scale=1.2]{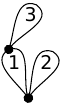}}}
        \def\Atwo{\A^{(2)}}
\author{Imma G\'alvez-Carrillo}
\address{Departament de Matem\`atica Aplicada III,
      Universitat Polit\`ecnica de Cata\-lunya\\ Escola d'Enginyeria de Terrassa, Carrer Colom 1, 08222 Terrassa (Bar\-celona), Spain}
\ead{m.immaculada.galvez@upc.edu}
\author{Leandro Lombardi}
\address{Departamento de Matem\'atica, Universidad de Buenos Aires\\
Ciudad Universitaria, Buenos Aires, Argentina}
\ead{llombard@dm.uba.ar}
\author{Andrew Tonks%\corref{andy}
}
\address{STORM, London Metropolitan University\\
166--220 Holloway Road, London N7 8DB, United Kingdom}
\ead{a.tonks@londonmet.ac.uk}
\title{An $\A_\infty$ operad in spineless cacti}
\begin{document}\maketitle

\begin{abstract}The d.g.\ operad $\C$ of cellular chains on the operad of spineless cacti~\cite{Kau07}
 is isomorphic to the Gerstenhaber--Voronov operad codifying the cup product and brace operations on the Hochschild cochains of an associative algebra, and to the suboperad $F_2\X$ of the surjection operad of~\cite{BF04}. Its homology is the Gerstenhaber operad $\G$.
We construct an operad map
$\psi:\A_\infty\longrightarrow \C$
such that $\psi(m_2)$ 
is commutative and  
$H_*(\psi)$ is the canonical map $\A\to \C\!om\to \G$. This formalises the idea that, since the cup product is commutative in homology,  its symmetrisation is a homotopy associative operation.
Our explicit $\A_\infty$ structure does not vanish on non-trivial shuffles in higher degrees, so does not give a map $\C om_\infty\to \C$. If such a map could be written down explicitly, it would immediately lead to a $\G_\infty$ structure on $\C$ and on Hochschild cochains, that is, a direct proof of the Deligne conjecture.
\end{abstract}

\section*{Introduction}
 
The Hochschild cohomology $H^*(A,A)$ of an associative algebra is an algebra over the Gerstenhaber operad $\G$, 
and Deligne's conjecture that this structure lifts to a suitable strong homotopy algebra structure on the cochain complex $C^*(A,A)$ has received much attention. Many proofs have appeared and there are now a large number of differential graded operads, all weakly equivalent to the singular chains on the little discs operad,  known to act on the Hochschild cochains. 
In particular, Tamarkin~\cite{Tam98b} showed that  $C^*(A,A)$ is a homotopy Gerstenhaber algebra in the operadic sense: the (quasi-free, minimal) resolution $\G_\infty$ of the Gerstenhaber operad acts on $C^*(A,A)$, via an operad $\B_\infty$ considered in~\cite[\S5.2]{GJ94}. Later, McClure--Smith~\cite{McS02} showed that the Gerstenhaber--Voronov operad $\H$ encoding the cup product and brace operations on cochains~\cite{GV} is also equivalent to the chains on the little discs. 
Extensions of the Deligne conjecture from associative to $\A_\infty$-algebras $A$ were given in~\cite{KS00,KauSch10}, and a cyclic version in~\cite{Kau08}. 

The aim of this note is modest in comparison: we provide a first step towards a possible {\em explicit} operad map from $\G_\infty$ to $\H$ which in homology is the identity map on $\G$.
 Concretely, we 
define an operad map 
\begin{equation}\A_\infty\longrightarrow F_2\X,\label{mainmap}\end{equation}
which in homology is just the canonical map $$\A\to \C om\to \G.$$          
Here $\A$ and $\C om$ denote the associative and commutative operads, $\A_\infty$ is the operad encoding associative algebras up to homotopy, and $F_2\X$ is an operad
isomorphic to $\H$ which is given as a certain suboperad of the surjection operad $\X$ of Berger--Fresse (see \cite{McS02,BF04} and \cite[\S5]{McS03}). Alternatively, we can identify $\H$ with the operad $\cact$ of cellular chains on Kaufmann's 
topological operad  of \emph{spineless cacti}, by  \cite[Proposition 4.9]{Kau07} (see also~\cite{Kau05}). Operads of cacti were first introduced by Voronov~\cite[\S2.7]{Vo05} to codify the Batalin--Vilkovisky structure discovered in string topology, compare~\cite[\S2]{CJ02}.

The lowest degree operation of $\cact$ codifies the cup product of Hochschild cochains. This product is not commutative but, since its failure to be so is the boundary of Gerstenhaber's brace operation, it becomes commutative in homology.  Thus we require the comparison map \eqref{mainmap} to send the binary product in $\A_\infty$ to the symmetrisation of the cup product. Of course, the symmetrisation of an associative operation need not be associative.

\section{The operads of surjections and cacti}

Let $\Bbbk$ be a commutative ring. A {\em (dg-)operad}\/ $\mathcal{O}$ is a sequence of differential graded $\Bbbk$-modules $\mathcal{O}(n)$, $n\geq 0$, together with composition operations
$$\circ_i\colon \mathcal{O}(m)\otimes \mathcal{O}(n)\longrightarrow \mathcal{O}(n+m-1),\qquad 1\leq i\leq m,\quad n\geq 0,$$
satisfying the operadic relations
\begin{enumerate}
\item $(a\circ_i b)\circ_j c=(-1)^{|b||c|}(a\circ_j c)\circ_{i+n-1}b$ if $1\leq j<i$ and $c\in\mathcal{O}(n)$.
\item $(a\circ_i b)\circ_j c=a\circ_i( b\circ_{j-i+1}c)$ if $b\in\mathcal{O}(m)$ and $i\leq j <m+i$.
\end{enumerate}
In addition there is a two sided unit $1\in \mathcal{O}(1)$ for the operations $\circ_i$. 
These structures often appear in the literature under the name  \emph{non-symmetric} operad.

Let us recall from \cite{BF04} the definition of the {\em surjection operad} $\X$.

\begin{defi} \label{defiX}
Let $\X(n)$ be the graded module whose degree $k$ component $\X(n)_k$ is spanned by the non-degenerate surjections $$u: \{1, \dots, n+k \} \to \{1, \dots, n \},$$ that is, the surjective functions $u$  such that $ u(i) \neq u(i+1)$ for all $i$.  We often write a surjection $u$ as the sequence of its values, 
$$u =(u(1), \dots, u(n + k)).$$
The operad structure maps are defined by
\begin{align}\nonumber 
\X(m)_l &\otimes \X(n)_k   \xto{\circ_t} \X(m+n-1)_{k+l}\qquad(1\leq t\leq m)\\
v&\circ_t u = \!\!\!\!\!\!\!\!\!\!\!\!
\sum_{
1=j_0\leq \dots\leq j_r=n+k
}
\!\!\!\!\!\!\!\!
\!\!\!\!
\pm
(\beta v_0, \alpha u_1, \beta v_1, %\alpha  u_2,  
\dots , 
%\beta v_{r-1}, 
\alpha  u_r, \beta v_r )\label{compsign}
\end{align}
Here $r=|v^{-1}\{t\}|\geq1$, the number of occurrences of the value $t$ in $v$, and $u_p$, $v_p$ 
 are subsequences of $u$, $v$ given by
$$
u_p = 
(u(i))_{j_{p-1}\leq i\leq j_p}
,\qquad
v = (v_0, t , v_1, t,  \dots , v_{r-1}, t, v_r ).
$$
The images of these subsequences are relabelled by composing with functions $\alpha,\beta$ given by
$$\alpha(s) = s + t -1,\qquad \beta (s) = \begin{cases} s & \text{if } s < t, \\ s + n -1 & \text{if } s>t.\end{cases}$$
To fix the signs, we consider the {\em relative degree} of $(u(a),u(a+1),\dots,u(b))$ in $u:\{1,\dots,n+k\}\to\{1,\dots,n\}$ to be 
$$
|\{a\leq i\leq b-1\::\:u(i)=u(i')\text{ for some }i<i'\leq n+k\}|
$$ 
Then the $\pm$ sign in \eqref{compsign} is defined by the Koszul sign rule from the permutation of $2r$ symbols
$$
v_1,  \dots , v_r, \, u_1, \dots, u_r 
\mapsto u_1, v_1,\dots, u_r, v_r  $$
in which the degrees
of $v_q$ for $q\neq r$  and for $q=r$ are the relative degrees of $(t,v_q,t)$  and of $(t,v_r)$ in $v$, respectively, and 
the degree of each  $u_p$ is its relative degree in $u$.

The differential $\d: \X(n)_k \to \X(n)_{k-1} $ is given by %\\
$$\d(u) = \sum_{i=1}^{n+k} (-1)^{r_i}\, \d_i (u)$$
where $\d_i$ skips the $i$th entry,
$$\d_i(u) =  \big( u(1), \dots, \^{u(i)},\dots , u(n+k) \big).$$
The term $\d_i(u)$ is omitted if $u(i)$ is the {\em only} occurrence in $u$ of some value.
Otherwise, $r_i$ is the relative degree of $(u(1),\dots,u(i))$ in $u$ if $u(i)$ is not the last occurrence of a value, or the relative degree of $(u(1),\dots,u(j+1))$ in $u$ if $u(i)$ is the last occurrence and $u(j)$ is the penultimate  occurrence.\end{defi}
For the proof that this structure indeed defines a differential graded operad we refer the reader to~\cite[Proposition 1.2.7]{BF04}. 
\begin{example}\label{goodsign}
Consider the sequences $v=(1,2,1)\in\X(2)_1$ and $u\in\X(n)_k$. Then the composite $(1,2,1)\circ_1 u\in \X(n+1)_{k+1}$ 
is given by
$$
(1,2,1)\circ_1 u\;=\;\sum_{j=1}^{n+k} (-1)^{|u|_j} \;\ring{u}_j
$$
where $|u|_j$ is the relative degree of $(u(1),\dots,u(j))$ in $u$ 
and $\ring u_j$ is obtained by  replacing the $j$th value in $u$ by $(u(j),n+1,u(j))$,
$$
\ring u_j\,=\,(u(1),\dots,u(j-1),u(j),n+1,u(j),u(j+1).\dots,u(n+k)).
$$
\end{example}
The operad $\X$ has a natural filtration by suboperads
$$ F_1\X \subset F_2\X \subset 
\dots \subset F_m \X \subset \dots \subset \X$$
The $m$th stage is spanned by the surjections $u$ in which all subsequences of the form $(u(r_p))_{1\leq p\leq q}=(i,j,i,j,i,\dots)$, $i\neq j$,
 have length $q\leq m+1$.
\\

The surjection operad $\X$ is, with different sign conventions, termed the sequence operad $\mathcal S$ in~\cite{McS03}, and the second stage $F_2\X$ is just the so-called Gerstenhaber--Voronov operad $\H$ encoding cup product and brace operations on the Hochschild cochains of an associative algebra. 
The second stage of the filtration is also isomorphic to the cellular chains on the topological operad of \emph{spineless cacti} \cite[Proposition 4.9]{Kau07}, see also \cite[Section 4]{Sa09} and \cite{KLP}. Here we take this as our definition:
\begin{defi} \label{defiCacti} 
The (dg) operad  $\cact$ of spineless cacti is the suboperad $F_2 \X$ of the surjection operad.  
\end{defi}

Thus a non-degenerate surjection $u : \{1,\dots, n+k  \} \to \{1,\dots, n \}$ is a \emph{cactus} if it has no subsequence of the form $( i,j,i,j )$ for $i\neq j$. 

Alternatively, a cactus may be represented as a configuration of labelled circles embedded in the plane, with pairwise intersections at at most one point, such that the union of all circles and their interiors forms a contractible region. A root is given on the boundary.

The circles are usually called the {\em lobes} of the cactus, and are labelled $1,\dots,n$. The intersection points together with the root divide the boundary of the cactus into segments, called the {\em arcs} of the cactus. The positive (anticlockwise) orientation of the plane specifies a labelling $1,\dots,n+k$ of the arcs, beginning at the root. The map from  arcs to the lobes that contain them gives a non-degenerate surjection $u$ as above. Examples are given in Figure~\ref{cactfig}. 

We see that the degree $|u|=k$ of a cactus $u$ is the number of distinct intersection points not equal to the root, and the maximum degree of a cactus with $n$ lobes is $n-1$, when the intersection points and the root are all distinct. The terms in the boundary $\d u$ are the cacti obtained by contracting one arc. The operadic composition $v\circ_t u$ in $\C$ may be seen (modulo signs) as substituting $u$ into the  lobe $t$ of $v$, and distributing the subcacti at the intersection points on this lobe in all possible ways across the arcs of $u$.

\begin{figure}
\center\begin{tabular}{ccccc}
 \raisebox{-5pt}{\includegraphics[scale=1]{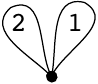}} & \raisebox{-5pt}{\includegraphics[scale=1]{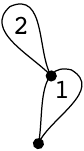}} &  \raisebox{-5pt}{\includegraphics[scale=1.4]{m3-b.pdf}}& \raisebox{-5pt}{\includegraphics[scale=1]{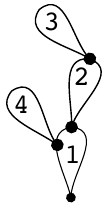}} &  \raisebox{-5pt}{\includegraphics[scale=0.55]{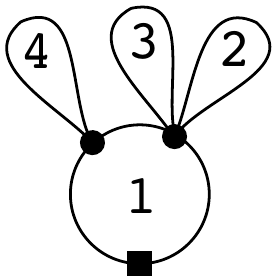}}  \\
$(1,2)$ & $(1,2,1)$ & $(2,1,3,1)$ & $(1,2,3,2,1,4,1)$ & $(1,2,3,1,4,1)$\\
\end{tabular}
\caption{Graphical representations of cacti.\label{cactfig}}
\end{figure}

\section{Commutativity and associativity}

The element $u=(1,2)=\!\!\text{\includegraphics{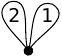}}\:$ of the cactus operad $\C$ encodes a binary product that is clearly associative:
$$(1,2) \circ_1 (1,2) = (1,2,3) = (1,2) \circ_2 (1,2).$$
In the homology of the operad it becomes commutative, since $(2,1)-(1,2)$ is the boundary of the element $(1,2,1)$,
$$\delta \Big( \,\raisebox{-10pt}{\includegraphics[scale=0.7]{ca121.pdf}} \Big) = \raisebox{-5pt}{\includegraphics[scale=0.8]{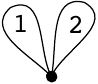}}  - \raisebox{-5pt}{\includegraphics[scale=0.8]{ca12.pdf}}.$$
On the other hand, we may consider the operation codified by 
$$ (1,2)+(2,1)=\raisebox{-5pt}{\includegraphics[scale=0.8]{ca12.pdf}}  + \raisebox{-5pt}{\includegraphics[scale=0.8]{ca21.pdf}}$$
This is commutative but no longer associative,
\begin{align*}
((1,2) + (2,1)) \circ_1 ((1,2) + (2,1)) \;\; &- \;\;((1,2) + (2,1)) \circ_2 ((1,2) + (2,1))\\ 
=\;\;(2,1,3) + (3,1,2)\qquad\qquad &\qquad\qquad -(1,3,2) - (2,3,1). 
\end{align*}
This failure of associativity will obviously vanish in homology, and we find, for example, that it is the boundary of the element $(1,3,1,2)-(2,1,3,1)$. 
Our goal is to show that this extends to an $\A_\infty$ structure:
\begin{teo}\label{mainthm}
There is an operad morphism $\psi:\A_\infty   \to    \Cacti$ given by
$$
\psi(m_2)=(2,1)+(1,2),\qquad\qquad \psi(m_n)=\sum_{u\in\C'_n}(-1)^{p(u)}\, u.
$$
Here $\C'_n$ is the set of all cacti $u:\{1,\dots,2n-2\}\to\{1,\dots,n\}\in\C(n)_{n-2}$  that contain no subsequences of the form $(i,j,i)$ with $j=i+1$ or $j<i$.
\end{teo}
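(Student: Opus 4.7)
The operad $\A_\infty$ is freely generated by the $m_n$ in arity $n$ and degree $n-2$, $n\geq 2$, modulo the Stasheff relations
$$\d\, m_n \;=\; \sum_{\substack{p+q=n+1\\ p,q\geq 2,\;1\leq i\leq p}} \pm\,m_p\circ_i m_q,$$
so to build $\psi$ it suffices to specify chains $\psi(m_n)$ of degree $n-2$ whose images satisfy these relations. The degrees are correct because $\C'_n\subset\C(n)_{n-2}$, and the $n=2$ relation is vacuous, so the entire content of the theorem is the identity
$$\d(\psi(m_n))\;=\;\sum_{\substack{p+q=n+1\\ p,q\geq 2,\;1\leq i\leq p}} \pm\,\psi(m_p)\circ_i\psi(m_q)$$
in $\C(n)_{n-3}$, for each $n\geq 3$. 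A preliminary task left implicit in the statement is to pin down the signs $p(u)$; I anticipate defining them via a combinatorial statistic (for example an inversion count) on the planar tree of $n$ labelled lobes and $n-1$ distinct intersection points underlying each $u\in\C'_n$, or equivalently by an inductive compatibility forced by the desired identity.

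The strategy is a term-by-term comparison. The left hand side expands using Definition~\ref{defiX} as
$$\d(\psi(m_n))\;=\;\sum_{u\in\C'_n}\sum_j (-1)^{p(u)+r_j(u)}\,\d_j(u),$$
the inner sum being over those $j$ for which $u(j)$ is not the only occurrence of its value. The right hand side expands bilinearly into sums of shuffles $v\circ_i u'$ over $(v,u')\in\C'_p\times\C'_q$ via the formula \eqref{compsign}. Every summand on both sides is a cactus in $\C(n)_{n-3}$, so the problem reduces to a combinatorial matching: the arc-contractions $\d_j(u)$ of top-degree cacti $u\in\C'_n$ either merge two intersection points on a common lobe, in which case I expect them to cancel in pairs between two elements of $\C'_n$ differing by a local swap, or they realise a splitting of $u$ as $v\circ_i u'$ matching a unique shuffle term on the right.

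The main obstacle will be the sign bookkeeping. The Berger--Fresse signs from \eqref{compsign}, the sign $r_j$ in the differential, the Stasheff signs, and the not-yet-fixed $p(u)$ all interact, and the $p(u)$ must be chosen so that on each matched pair these four contributions conspire to agree. My plan is an induction on $n$: the base case $n=3$ is already essentially verified in the calculation immediately preceding the theorem, where $(1,3,1,2)-(2,1,3,1)$ is exhibited as a boundary witnessing the failure of associativity of the symmetrised product $\psi(m_2)$. For the inductive step, I would reduce the global identity to a local sign identity at the contracted arc, and this local identity would then force the recursive definition of $p(u)$ needed to close the induction.
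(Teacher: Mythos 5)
Your plan correctly identifies what must be checked — that $\d(\psi(m_n))$ equals the Stasheff expression in $\psi(m_p)\circ_i\psi(m_q)$ — but it has a genuine gap at its centre: the signs $p(u)$ are never produced, and their existence is not a bookkeeping afterthought but essentially the whole content of the theorem. Each cactus $u\in\C'_n$ has $2n-3$ arcs whose contractions $\d_j(u)$ must each either cancel against a contraction of some other $u'\in\C'_n$ or match a specific term of some $\psi(m_p)\circ_i\psi(m_q)$; each such cancellation or matching imposes a constraint relating $p(u)$ to $p(u')$ or to the Berger--Fresse and Stasheff signs. Since $|\C'_n|=2(2n-5)!!$ and every $u$ participates in many constraints, this system is heavily overdetermined, and saying the local identities ``force the recursive definition of $p(u)$'' presupposes exactly the global consistency you are trying to prove. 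Likewise the claim that the non-splitting contractions ``cancel in pairs between two elements of $\C'_n$ differing by a local swap'' is asserted, not established, and it is not obvious: one must check that the swapped cactus still avoids the forbidden subsequences $(i,j,i)$ with $j=i+1$ or $j<i$, i.e.\ that $\C'_n$ is closed under the pairing. Finally, the proposed induction on $n$ has no workable inductive structure as stated, because the terms of $\psi(m_n)$ are not organised by your plan as being built from those of $\psi(m_{n-1})$, so the inductive hypothesis is never actually invoked in the inductive step.

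The paper closes precisely this gap by refining the target of the induction. It factors $\psi$ as $\mu\circ\phi$ through the operad $\Atwo_\infty$ of homotopy matching dialgebras (Lemma~\ref{phirmk}), splitting $\psi(m_n)=\sum_\xi\mu(m_\xi)$ over strings $\xi\in\{\blanco,\negro\}^{n-1}$, and defines each $\mu(m_{\xi\blanco})=(\mu(m_\xi))^\blanco$ and $\mu(m_{\xi\negro})=(\mu(m_\xi))^\negro$ by the explicit operations of Definition~\ref{ubw}, which insert the new lobe $n+1$ before or after the unique occurrence of $n$ \emph{with specified signs}. This is what replaces your undefined $p(u)$: the sign of every $u\in\C'_{n}$ is determined by the unique way it arises from $\C'_2$ under iterated $(-)^\blanco$, $(-)^\negro$. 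The compatibilities you would need to verify arc by arc are then established once and for all in Propositions~\ref{bncomp} and~\ref{bncomp2} (compatibility of $(-)^\blanco$, $(-)^\negro$ with $\d$ and with $\circ_i$), and the Stasheff-type identity for each $m_\xi$ follows by induction on the length of $\xi$ in Lemma~\ref{mupartial}, comparing the decompositions of $\xi\blanco$ and $\xi\negro$ against formula~\eqref{partialxi}. If you want to pursue a direct verification without this intermediate operad, you would at minimum need to write down $p(u)$ in closed form and prove the pairing and matching bijections with their sign identities — a substantially harder computation than the one sketched.
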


In low degrees we have
\begin{equation}\label{m23}%\nonumber
m_2   \mapsto  \raisebox{-5pt}{\includegraphics[scale=0.9]{ca21.pdf}}  + \raisebox{-5pt}{\includegraphics[scale=0.9]{ca12.pdf}},\qquad\qquad 
m_3  \mapsto   \m3.
\end{equation}
as we saw above. 
The remainder of the paper is dedicated to the proof of the theorem, and in particular to making explicit the signs. For this we will need not only the operad $\A_\infty$ but also an operad $\Atwo_\infty$ codifying {\em homotopy matching dialgebras}~\cite{ZBG12}.
\\

An alternative description of $\C'_n$ is as follows. It is the set of cacti with no triple intersections between the $n$ lobes, such that if the lobe labelled $j$ is above the lobe labelled $i$ then $j-i\geq2$.
The condition on the lobe labels is just the forbidden subsequence condition, which in particular implies that lobes 1 and 2 must intersect at the root. The maximum possible degree of a cactus in $\C(n)$ satisfying this condition is $n-2$, which is obtained if and only if there are no triple intersections between lobes.

The value $n$ can occur only once in any sequence $u\in\C'_n$, as there are no lobes above the one labelled $n$.  For $n\geq 3$ we deduce that, since three lobes cannot intersect, $u$ must have the form
$$
(1,\dots,r,n,r,\dots,2)
\qquad\text{or}\qquad (2,\dots,r,n,r,\dots,1)
$$
for some $r\leq n-2$. 

Our proof of Theorem~\ref{mainthm} is based on the following inductive construction. 

\begin{defi}\label{ubw}
If $u$ is a cactus in $\C(n)_k$ such that $u^{-1}(\{n\})=\{i\}$, we let
$$
u^\blanco  \;=\; \sum_{j<i} (-1)^{k+|u|_j}\,\ring u_j,\qquad\qquad
u^\negro  \;=\;- \sum_{j>i} (-1)^{k+|u|_j}\,\ring u_j,
$$
Here 
$\ring u_j\in\C(n+1)_{k+1}$ is the sequence obtained from $u$ by replacing $u(j)$ with $(u(j),n+1,u(j))$, and $|u|_j$ is the relative degree of $(u(1),\dots,u(j))$ in $u$.
This definition is extended linearly to the submodule of $\C(n)$ spanned by those cacti $u$ with $u^{-1}(\{n\})$ a singleton.
\end{defi}

It is clear that the terms $\ring u_j$ of $u^\blanco$ and $u^\negro$  
are elements of $\C_{n+1}'$ if $u$ is an element of $\C_n'$, since they are obtained from adding a lobe labelled $n+1$ above any lobe except that which is labelled $n$. Conversely any  element  $v\in\C_{n+1}'$ appears as a term of $u^\blanco$ or $u^\negro$, where $u\in \C'_{n}$ is obtained by removing the lobe labelled $n+1$ from $v$.

\begin{lema}For  $n\geq 3$,
\begin{equation}\label{enu}%\nonumber
|\C'_n|=2(2n-5)!!=2\cdot(2n-5)\cdot(2n-7)\dots 5\cdot 3\cdot 1.
\end{equation}
\end{lema}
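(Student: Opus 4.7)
The plan is to prove \eqref{enu} by induction on $n$, via the recursion
$$|\C'_{n+1}| = (2n-3)\,|\C'_n|\qquad(n\geq 3)$$
and the base case $|\C'_3|=2$.

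For the base case, the description immediately preceding Definition~\ref{ubw} forces every $u\in\C'_3$ to have the form $(1,\dots,r,3,r,\dots,2)$ or $(2,\dots,r,3,r,\dots,1)$. Since the sequence length is only $4$, necessarily $r=1$, and we recover the two cacti $(1,3,1,2)$ and $(2,1,3,1)$ displayed in \eqref{m23}. Hence $|\C'_3|=2$.

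For the inductive step I would invoke the paragraph following Definition~\ref{ubw}: deletion of the lobe labelled $n+1$ defines a map $\pi:\C'_{n+1}\to \C'_n$ whose fibre over $u$ coincides, as an unsigned set, with the collection of terms $\ring u_j$ appearing in $u^\blanco+u^\negro$. Since lobe $n$ is a leaf, the value $n$ occurs only once in $u$, say $u^{-1}(\{n\})=\{i\}$, and these terms are indexed by $j\in\{1,\dots,2n-2\}\setminus\{i\}$; distinct $j$'s yield distinct $\ring u_j$ because the symbol $n+1$ lands in distinct positions. Thus $|\pi^{-1}(u)|=2n-3$ for every $u$, whence $|\C'_{n+1}|=(2n-3)|\C'_n|$, and iterating from the base case gives $|\C'_n|=(2n-5)(2n-7)\cdots 3\cdot 2=2\,(2n-5)!!$.

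The main obstacle is justifying the fibre description, especially the surjectivity of $\pi$. Membership $\ring u_j\in\C'_{n+1}$ for $j\neq i$ is straightforward, since $u(j)\leq n-1$ prevents the inserted letter $n+1$ from participating in any forbidden subsequence $(a,b,a)$. Surjectivity is clearest from the cactus picture: in any $v\in\C'_{n+1}$, the lobe $n+1$ is a leaf attached at a single non-intersection point to some lobe $a\leq n-1$, and the two arcs flanking the unique occurrence $j'$ of $n+1$ in the anticlockwise traversal are the two halves of that arc, both labelled $a$. Therefore $v(j'-1)=v(j'+1)=a$ (and $1<j'<2n$, since the first and last arcs always belong to lobes $1$ and $2$), so $v=\ring u_{j'-1}$ where $u\in\C'_n$ is obtained from $v$ by collapsing the consecutive $(a,n+1,a)$ back to $a$.
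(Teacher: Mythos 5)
Your proof is correct and takes essentially the same route as the paper's: induction on $n$ using the fact that the insertion construction of Definition~\ref{ubw} (equivalently, deletion of the lobe labelled $n+1$) gives a $(2n-3)$-to-one correspondence between $\C'_{n+1}$ and $\C'_n$, hence $|\C'_{n+1}|=(2n-3)|\C'_n|$. The only differences are cosmetic: you start the induction at $|\C'_3|=2$ where the paper starts at $|\C'_2|=2$, and you spell out the surjectivity of the deletion map in more detail than the paper does.
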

\begin{proof}
Observe that the set $\C'_2$ has just two elements, $(1,2)$ and $(2,1)$. Now for each element $(u(1),\dots,u(2n-2))\in\C'_n$ the construction in Definition~\ref{ubw} 
gives $2n-3$ terms in $u^\blanco$ and $u^\negro$. For distinct $u\in\C'_n$ these terms are distinct elements of $\C'_{n+1}$ 
and every element of  $\C'_{n+1}$ appears in this way. Therefore  $|\C'_{n+1}|=(2n-3)|\C'_{n}|$ and the result follows by induction.
\end{proof}

The operations $u^\blanco$ and $u^\negro$ behave nicely with respect to the differential and the operadic composition.

\begin{prop} \label{bncomp} 
For any cactus $u \in \C(n)_k$  such that $u^{-1}(\{n\})$ is a singleton,
\begin{eqnarray*}
u^\blanco-u^\negro&=&(-1)^k (1,2,1)\circ_1 u\;-\; u\circ_n(1,2,1),\\
\d (u^\blanco) - {(\d u)}^\blanco  & = & (-1)^k \;( (2,1) \circ_1 u -  u \circ_{n} (2,1) ) \\
\d (u^\negro)  - {(\d u)}^\negro   & = & (-1)^k \;( (1,2) \circ_1 u -  u \circ_{n} (1,2) )
\end{eqnarray*}
\end{prop}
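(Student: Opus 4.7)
The first identity follows directly from Example~\ref{goodsign} together with the composition formula of Definition~\ref{defiX}. Expanding $(-1)^k(1,2,1)\circ_1 u$ via Example~\ref{goodsign} yields $\sum_{j=1}^{n+k}(-1)^{k+|u|_j}\ring u_j$, while $u\circ_n(1,2,1)$ reduces to a single term because the hypothesis $u^{-1}(n)=\{i\}$ forces $r=1$ in the composition formula; a short Koszul sign computation, using $|u|_{i+1}=|u|_i$ (since $n$ has no later occurrence in $u$), gives $u\circ_n(1,2,1)=(-1)^{k+|u|_i}\ring u_i$. Subtracting produces $u^\blanco-u^\negro=\sum_{j\neq i}(-1)^{k+|u|_j}\ring u_j$, as required.

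For the second identity, the plan is to expand $\d u^\blanco=\sum_{j<i}(-1)^{k+|u|_j}\d\ring u_j$ and, for each $j$, decompose $\d\ring u_j=\sum_l(-1)^{r_l(\ring u_j)}\d_l\ring u_j$ according to where the differential acts. The term at $l=j+1$ is omitted since $n+1$ is a unique value; the remaining terms split into \emph{adjacent} terms at $l\in\{j,j+2\}$ surrounding the inserted $n+1$, and \emph{distant} terms at $l<j$ or $l>j+2$. A direct relative-degree computation gives $r_j(\ring u_j)=|u|_j$ and $r_{j+2}(\ring u_j)=|u|_j+1$, so the adjacent contribution to $\d u^\blanco$ equals $(-1)^k\sum_{j<i}\bigl(\d_j\ring u_j-\d_{j+2}\ring u_j\bigr)$. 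The crucial observation is the identity $\d_{j+2}\ring u_j=\d_{j+1}\ring u_{j+1}$, since both describe the insertion of $n+1$ between positions $j$ and $j+1$ of $u$; this makes the sum telescope to $(-1)^k(\d_1\ring u_1-\d_i\ring u_i)$, whose endpoints are immediately identified with $(2,1)\circ_1 u$ and $u\circ_n(2,1)$, producing the right-hand side of the second identity.

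It remains to check that the distant contribution reassembles into $(\d u)^\blanco$. The plan is to match coefficients of each basis element $\ring{(\d_s u)}_{j'}$ on both sides, relying on the sign congruences $r_l(\ring u_j)\equiv r_l(u)\pmod 2$ for $l<j$, and a congruence of the form $r_l(\ring u_j)-r_s(u)+\gamma_s\equiv 1\pmod 2$ for $l>j+2$ (with $s=l-2$), where $\gamma_s\in\{0,1\}$ measures the parity discrepancy $|u|_{j}-|\d_s u|_{j}$. I expect the main obstacle to be verifying this cancellation in the sub-cases where $u(l-2)$ is the last occurrence of its value in $u$: both $r_l(\ring u_j)$ and $|\d_l u|_{j'}$ then pick up correction terms depending on whether the penultimate occurrence of that value lies before or after $j$, and on whether $u(j)$ coincides with $u(l-2)$, and one must check that these corrections cancel pairwise. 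Once the second identity is established, the third can be obtained either by an entirely analogous telescoping of $\sum_{j>i}$ (whose boundary endpoints are $u\circ_n(1,2)$ and $(1,2)\circ_1 u$), or deduced formally: applying $\d$ to the first identity and using $\d(1,2,1)=(2,1)-(1,2)$ together with the first identity applied to $\d u$ shows that the ``error'' terms of the second and third identities must coincide, so the third follows from the second.
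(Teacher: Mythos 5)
Your treatment of the first identity is exactly the paper's: compare the expansion of $(-1)^k(1,2,1)\circ_1 u$ from Example~\ref{goodsign} with Definition~\ref{ubw} and identify the single missing term $j=i$ with $u\circ_n(1,2,1)$. The problem is the second identity, where what you give is a plan rather than a proof. The telescoping of the ``adjacent'' terms is sound and does produce $(-1)^k\bigl((2,1)\circ_1 u - u\circ_n(2,1)\bigr)$, but the entire content of the identity is then pushed into the claim that the ``distant'' terms reassemble into $(\d u)^\blanco$, and you explicitly leave the required sign congruences and the last-occurrence sub-cases unverified. Those sub-cases (where deleting an entry changes which occurrence of a value is last, and hence shifts both the $r_l$ and the relevant relative degrees) are precisely the delicate part of any direct computation with the Berger--Fresse signs, so as written the argument for the second identity is incomplete, and with it the third.

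The irony is that your closing remark already contains almost all of the paper's proof, which avoids this bookkeeping entirely. Apply $\d$ to the first identity, use $\d(1,2,1)=(2,1)-(1,2)$, and apply the first identity again to each term of $\d u$ (legitimate, since the value $n$ still occurs exactly once in each such term). This yields
$$\d(u^\blanco)-\d(u^\negro) \;=\; (\d u)^\blanco-(\d u)^\negro + (-1)^k\bigl((2,1)\circ_1 u-(1,2)\circ_1 u- u\circ_n(2,1)+u\circ_n(1,2)\bigr),$$
which is exactly your observation that the error terms of the second and third identities coincide. But one can say more: every surjection occurring in this equation contains each of the values $n$ and $n+1$ exactly once, and the basis elements in which $n+1$ precedes $n$ are precisely those occurring in $\d(u^\blanco)$, $(\d u)^\blanco$, $(2,1)\circ_1 u$ and $u\circ_n(2,1)$, while those in which $n+1$ follows $n$ are precisely those occurring in $\d(u^\negro)$, $(\d u)^\negro$, $(1,2)\circ_1 u$ and $u\circ_n(1,2)$. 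Since these two families are disjoint, the single equation splits into the second and third identities separately, with no need to compute $\d(u^\blanco)$ at all. You should either carry out the distant-term verification in full detail, or (better) replace it by this splitting argument.
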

\begin{proof}
Suppose  $u^{-1}(\{n\})=\{i\}$. 
For the first equation, observe from Example \ref{goodsign} and Definition \ref{ubw} 
that the difference between $(-1)^k(1,2,1)\circ_1u$  and
$u^\blanco-u^\negro$ is just the term 
$$
u \circ_n (1,2,1) \;= \; (-1)^{k - |u|_i} \, \ring u_i.
$$
Taking the differential of the first equation gives
\begin{align*}
\delta(u^\blanco) - \delta(u^\negro) \;=\;
& (-1)^{k} ( (2,1) -(1,2) ) \circ_1 u  \quad- (-1)^{k} u \circ_n ( (2,1) -(1,2) )  \\
 +&(-1)^{k-1}\;\; (1,2,1) \circ_1 \delta u \qquad\qquad\;- \delta u \circ_n (1,2,1) .
\end{align*}
Now we can use the first equation again for each term $v$ in $\d u$, since still $n$ appears only once in $v$. Hence
\begin{align*}
\delta(u^\blanco) - \delta(u^\negro) 
\;=\; & (-1)^{k} 
( 
(2,1) \circ_1 u
-(1,2)  \circ_1 u
- u \circ_n  (2,1) +  u \circ_n  (1,2) ) \\
 +& (\delta u)^\blanco -  (\delta u)^\negro.
\end{align*}
In any of the terms in this equation, each of the values $n$ and $n+1$ appear once only, and the equation splits into the two equations we require.
On the left hand side,
a term $v$ belongs to the expression $\delta(u^\blanco)$ if the value $n+1$ appears before the value $n$ in the sequence $v$, and it belongs to the expression $\delta(u^\negro)$ if the value $n+1$ appears after the value $n$.
On the right hand side, the value $n+1$ appears before the value $n$ in the terms of $(\delta u)^\blanco$, $(2,1) \circ_1 u$ or $u \circ_n (2,1)$, and the value $n+1$ appears after the value $n$ in the terms of $(\delta u)^\negro$, $(1,2) \circ_1 u$ or $u \circ_n (1,2)$.
\end{proof}

\begin{prop}\label{bncomp2}
For any cacti $u'\in\C(p)_k$, $u''\in\C(q)_{\ell}$,
 such that ${u'}^{-1}(\{p\})$, ${u''}^{-1}(\{q\})$ 
are singletons,
\begin{align*}
(u'\circ_i u'')^\blanco
=(-1)^{\ell }u'^\blanco\circ_i u''
,\qquad\qquad
(u'\circ_p u'')^\blanco
=(-1)^{\ell }u'^\blanco\circ_p u''+u'\circ_p u''^\blanco
\\
(u'\circ_i u'')^\negro
=(-1)^{\ell }u'^\negro\circ_i u''
,\qquad\qquad
(u'\circ_p u'')^\negro
=(-1)^{\ell }u'^\negro\circ_p u''+u'\circ_p u''^\negro
\end{align*}
for  $i\leq p-1$.
\end{prop}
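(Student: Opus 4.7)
The plan is a direct term-by-term comparison of the two sides, using the definitions of $u^\blanco$, $u^\negro$ (Definition~\ref{ubw}) and of operadic composition (Definition~\ref{defiX}). I treat the $\blanco$ identities; the $\negro$ cases are analogous. Write $u'^{-1}(\{p\})=\{j'_0\}$ and $u''^{-1}(\{q\})=\{j''_0\}$, and classify the entries of any term of $u'\circ_i u''$ as being of $u'$-\emph{type} (a relabelled $\beta u'(j)$) or $u''$-\emph{type} (a relabelled $\alpha u''(m)$ in the $u''$-slice placed at some $i$-occurrence $s_q$ of $u'$).

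For $i\leq p-1$: since $p\neq i$, the unique entry $p+q-1=\beta(p)$ in the composite is of $u'$-type and is untouched by the $u''$-splits. The entries strictly to its left are either (a) $\beta u'(j)$ for some $j<j'_0$, or (b) $\alpha u''(m)$ in a $u''$-slice placed at some $i$-occurrence $s_q<j'_0$. Inserting a new $p+q$ above a type-(a) entry produces exactly a term of $\ring{u'}_j\circ_i u''$ with $u'(j)\neq i$ from $u'^\blanco\circ_i u''$, while inserting above a type-(b) entry produces a term of $\ring{u'}_{s_q}\circ_i u''$ with $u'(s_q)=i$: the extra $i$-occurrence in $\ring{u'}_{s_q}$ contributes an additional $u''$-split, placing the new top lobe between the two resulting sub-slices. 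Together these exhaust $u'^\blanco\circ_i u''$, and the bijection works both ways. No contribution from $u'\circ_i u''^\blanco$ is needed because the new entry $q+1\in u''^\blanco$ would be relabelled as $\alpha(q+1)=q+i<p+q$, so $u'\circ_i u''^\blanco$ creates a lobe in the middle of the composite, not on top.

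For $i=p$: now $u'$'s unique $p$-entry is its only $i$-occurrence, so $u''$ is placed without splits at position $j'_0$, and the composite's unique $p+q-1=\alpha(q)$ is of $u''$-type. Entries to its left are (a) $u'(j)$ with $j<j'_0$, matched by $u'^\blanco\circ_p u''$ (where $\beta(p+1)=p+q$), or (b) $\alpha u''(m)$ with $m<j''_0$, matched by $u'\circ_p u''^\blanco$ (where $\alpha(q+1)=p+q$). This yields the required splitting into the two summands on the right-hand side.

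The main obstacle is the sign calculation. A term of $(u'\circ_i u'')^\blanco$ carries the Koszul composition-sign of the underlying term of $u'\circ_i u''$ together with $(-1)^{(k+\ell)+|u'\circ_i u''|_m}$; the matching term of $u'^\blanco\circ_i u''$ carries $(-1)^{k+|u'|_j}$ together with the Koszul composition-sign of $\ring{u'}_j\circ_i u''$, which may involve one more $u''$-split when $u'(j)=i$. The factor $(-1)^\ell$ reflects the permutation of the degree-one new entry $p+1$ past the sequence $u''$ (of degree $\ell$) in moving the $\blanco$ from after to before the composition. The relative degrees $|u'\circ_i u''|_m$ in the composite decompose compatibly as sums of $|u'|_j$ and internal $u''$-slice degrees; a routine case analysis (type-(a) versus type-(b) entries) then verifies that all remaining signs cancel, completing the proof in both cases.
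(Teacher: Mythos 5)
Your combinatorial skeleton is sound: the classification of entries of $u'\circ_i u''$ into $u'$-type and $u''$-type, the observation that the unique occurrence of $p+q-1$ is of $u'$-type exactly when $i\leq p-1$, and the bijection matching insertions of the top lobe $p+q$ above type-(a) entries with terms $\ring{u'}_j\circ_i u''$ ($u'(j)\neq i$) and insertions above type-(b) entries with the extra-slice terms of $\ring{u'}_{s}\circ_i u''$ ($u'(s)=i$) are all correct, as is the splitting into the two summands when $i=p$. This is, however, a genuinely different route from the paper, which never compares terms directly: it applies the first identity of Proposition~\ref{bncomp} to the composite $u'\circ_i u''$, rewrites $(1,2,1)\circ_1(u'\circ_i u'')$ and $(u'\circ_i u'')\circ_{p+q-1}(1,2,1)$ using only the two operad axioms, recognises the result as $(-1)^{\ell}(u'^\blanco-u'^\negro)\circ_i u''$ (plus $u'\circ_p(u''^\blanco-u''^\negro)$ when $i=p$), and then separates the black and white parts by the purely combinatorial criterion of whether $p+q$ precedes $p+q-1$ in each term.

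The genuine gap is the sign verification, which you defer to ``a routine case analysis''. It is not routine, and it is the entire content of the proposition: the Koszul sign in Definition~\ref{defiX} is built from relative degrees of the slices $v_0,\dots,v_r$ and $u_1,\dots,u_r$, and in your type-(b) case the term $\ring{u'}_{s}\circ_i u''$ involves a permutation of $2(r+1)$ symbols while the corresponding term of $u'\circ_i u''$ involves only $2r$; you must also reconcile the relative degree $|u'\circ_i u''|_{m}$ at a position $m$ inside a $u''$-slice (which mixes contributions from $u'$-positions and from earlier slices, including the duplicated boundary entries) with $|u'|_j$ plus slice degrees. None of this is carried out, and your heuristic for the factor $(-1)^{\ell}$ (``moving $p+1$ past $u''$'') is not where that sign actually arises -- it comes from the degree shift $k\mapsto k+\ell$ in the prefactor $(-1)^{\deg}$ of Definition~\ref{ubw}. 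Since the authors state explicitly that the point of the paper is ``making explicit the signs'', an argument that asserts the signs cancel is circular at the critical step. Either complete the sign computation in both cases (type (a), type (b), and $i=p$), or adopt the paper's strategy, which is designed precisely so that all signs are inherited from Proposition~\ref{bncomp} and the operad axioms and no direct comparison of Koszul signs is ever needed.
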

\begin{proof}
By Proposition \ref{bncomp}, we have
\begin{align*}
(u'\circ_i u'')^\blanco- (u'\circ_i u'')^\negro
&=(-1)^{k+\ell }(1,2,1)\circ_1(u'\circ_i u'')
- (u'\circ_i u'')\circ_{p+q-1}(1,2,1)
\end{align*}
and by the operadic relations this can be rewritten as
\begin{align*}
(-1)^{k+\ell }((1,2,1)\circ_1u')\circ_i u''
-(-1)^{\ell } (u'\circ_p(1,2,1))\circ_iu''
&&(i<p)\\
(-1)^{k+\ell }((1,2,1)\circ_1u')\circ_p u''-u'\circ_p (u''\circ_{q}(1,2,1))&&(i=p)
\end{align*}
Now the operadic relation $(u'\circ_p(1,2,1))\circ_p u''=u'\circ_p ((1,2,1)\circ_1 u'')
$ implies
$$
(u'\circ_i u'')^\blanco
- (u'\circ_i u'')^\negro=\begin{cases}(-1)^{\ell }(u'^\blanco-u'^\negro)\circ_i u''&(i<p)\\(-1)^{\ell }(u'^\blanco-u'^\negro)\circ_p u''+u'\circ_p (u''^\blanco-u''^\negro)&(i=p)\end{cases}
$$
This equation splits into the black and white parts as required. On the left hand side a term belongs to $(u'\circ_i u'')^\blanco$
 if and only if $p+q$ appears before $p+q-1$. On
the right hand side  $p+q$ appears before $p+q-1$ 
in the terms of $u'^\blanco \circ_i u''$ 
because
$p+1$ appears before $p$  in $u'^\blanco$, and also in the terms of 
$u' \circ_p u''^\blanco$ 
because
$q+1$ appears before $q$  in $u''^\blanco$.
\end{proof}

\section{Homotopy matching dialgebras}

The operad of matching dialgebras $\Atwo$ was defined in \cite{ZBG12,Zinb10}, compare \cite[Exercise 9.7.4]{LV}. It is a generalisation of the associative operad $\A$ codifying two binary operations $\blanco$, $\negro$ with four associative laws. It is a binary, quadratic, Koszul and self-dual operad. The definition of the differential graded operad $\A^{(2)}_\infty$ 
of homotopy matching dialgebras can be expressed as follows. 

\begin{defi}
Let $\Atwo_\infty$ be the free operad on generators $m_\xi\in\Atwo(n)_{n-2}$ for each string $\xi=(\xi_1,\dots,\xi_{n-1})\in\{\blanco,\negro\}^{n-1}$, $n\geq2$. The differential is defined by
\begin{align}\label{partialxi}
\partial (m_\xi) \;\;=\;\; \sum_{i=1}^n (-1)^{(i-1)}\!\!\!\!
\sum_{\xi' \circ_i \xi'' = \xi}  (-1)^{q (p-i)} m_{\xi'} \circ_i m_{\xi''}
\end{align}
where $p+q-1=n$, $\xi'\in\{\blanco,\negro\}^{p-1}$, $\xi'\in\{\blanco,\negro\}^{q-1}$ and we write 
$$\xi' \circ_i \xi''\;\;=\;\;(\xi'_1, \dots, \xi'_{i-1}, \xi''_1, \dots, \xi''_{q-1}, \xi'_i, \dots, \xi'_{p-1} )\;\;\in\;\;\{\blanco,\negro\}^{n-1}.$$
\end{defi} 

The definition of the classical homotopy associative operad $\A_\infty$ may be given for comparison as the free operad with generators $m_n\in\A(n)_{n-2}$, $n\geq2$ and differential
\begin{align}\label{partialn}
\partial (m_n) \;\;=\;\; \sum_{i=1}^n (-1)^{(i-1)}\!\!\!\!
\sum_{p+q-1=n}  (-1)^{q (p-i)} m_p \circ_i m_q.
\end{align}
The following result is clear.
\begin{lema}\label{phirmk}
There is a morphism of  differential graded operads  
$$\phi:\A_\infty \longrightarrow \Atwo_\infty$$ 
given on generators by
\begin{align*}
\phi(m_n) \;\;= \sum_{\xi\in\{\blanco,\negro\}^{n-1}}\!\!\!
 m_\xi .
\end{align*}
\end{lema}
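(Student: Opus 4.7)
The plan is to exploit the fact that $\A_\infty$ is the free (non-symmetric) dg operad on the generators $\{m_n\}_{n\geq 2}$, so the stated formula extends uniquely to a morphism of graded operads: each target $\sum_\xi m_\xi$ lies in the correct bidegree $\Atwo_\infty(n)_{n-2}$, so $\phi$ is well defined, and compatibility with operadic composition is automatic. It thus remains only to check that $\phi$ commutes with the differentials, and since both differentials satisfy the Leibniz rule with respect to $\circ_i$, it suffices to do this on generators, namely $\phi(\partial m_n) = \partial \phi(m_n)$ for every $n\geq 2$.

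Using that $\phi$ is a morphism of operads, applying $\phi$ to \eqref{partialn} yields
$$\phi(\partial m_n) \;=\; \sum_{i=1}^{n} (-1)^{i-1} \sum_{p+q-1=n} (-1)^{q(p-i)} \!\!\sum_{\substack{\xi' \in \{\blanco,\negro\}^{p-1}\\ \xi'' \in \{\blanco,\negro\}^{q-1}}}\!\! m_{\xi'} \circ_i m_{\xi''},$$
whereas applying $\partial$ term by term to $\phi(m_n) = \sum_\xi m_\xi$ via \eqref{partialxi} gives
$$\partial \phi(m_n) \;=\; \sum_{\xi \in \{\blanco,\negro\}^{n-1}} \sum_{i=1}^{n} (-1)^{i-1} \sum_{\xi' \circ_i \xi'' = \xi} (-1)^{q(p-i)} m_{\xi'} \circ_i m_{\xi''}.$$

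The combinatorial heart of the matter is that for each fixed triple $(i,p,q)$ with $p+q-1=n$ and $1\leq i\leq p$, the insertion $(\xi',\xi'')\mapsto \xi'\circ_i\xi''$ is a bijection from $\{\blanco,\negro\}^{p-1}\times\{\blanco,\negro\}^{q-1}$ onto $\{\blanco,\negro\}^{n-1}$: from $\xi$ one recovers $\xi''$ as the $q-1$ entries in positions $i,\dots,i+q-2$ and $\xi'$ as the remaining $p-1$ entries. Interchanging the order of summation in the second expression and reindexing the restricted sum over pairs with $\xi'\circ_i\xi''=\xi$ as an unrestricted double sum over $(\xi',\xi'')$ converts it into the first expression. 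Since the signs $(-1)^{i-1}$ and $(-1)^{q(p-i)}$ depend only on $i,p,q$, they pass unchanged through this reparametrisation.

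There is really no obstacle here beyond the bookkeeping above; the operad $\Atwo_\infty$ has been set up precisely so that summing $m_\xi$ over all colourings $\xi\in\{\blanco,\negro\}^{n-1}$ reproduces, via composition, the sums over $(\xi',\xi'')$ governing the associative differential on $\A_\infty$.
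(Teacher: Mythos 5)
Your proof is correct and is exactly the routine verification the paper has in mind when it states the lemma as ``clear'' without proof: freeness of $\A_\infty$ as a graded operad reduces everything to checking $\phi\partial m_n=\partial\phi(m_n)$ on generators, which follows from the bijection $(\xi',\xi'')\mapsto\xi'\circ_i\xi''$ between pairs of strings and decompositions of $\xi\in\{\blanco,\negro\}^{n-1}$ for each fixed $(i,p,q)$, the signs depending only on $(i,p,q)$. No discrepancy with the paper's (implicit) argument.
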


\noindent Theorem~\ref{mainthm} therefore follows from the following result.

\begin{teo}
There is a morphism of  differential graded operads  
$$\mu:\Atwo_\infty \longrightarrow \C$$
defined inductively on the generators by
\begin{align*}
&\;\mu(m_\blanco)=(2,1),&&
\;\mu(m_\negro)=(1,2),\\
&\mu(m_{\xi\blanco}) = \bigl(\mu(m_{\xi})\bigr)^\blanco, &&
\mu(m_{\xi\negro}) = \bigl(\mu(m_{\xi})\bigr)^\negro. 
\end{align*}
\end{teo}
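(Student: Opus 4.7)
The plan is to show that $\mu$, defined inductively on generators, extends consistently to a morphism of dg operads. Since $\Atwo_\infty$ is free as a graded operad, the defining formulas extend uniquely to a morphism of graded operads, provided Definition \ref{ubw} keeps applying at each step. This is the case: the largest label appears as a singleton in $\mu(m_\blanco) = (2,1)$ and $\mu(m_\negro) = (1,2)$, and both $(-)^\blanco$ and $(-)^\negro$ preserve this property by construction. It then suffices to verify $\d\mu(m_\xi) = \mu(\partial m_\xi)$ on each generator, which I will do by induction on the arity $n$. The base case $n = 2$ is immediate: $\partial m_\blanco = \partial m_\negro = 0$ (no decompositions with $p,q\geq 2$ and $p+q-1 = 2$), and $\d(2,1) = \d(1,2) = 0$ by degree.

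For the inductive step, write $\xi = \bar\xi\,\epsilon$ and assume $\epsilon = \blanco$; the case $\epsilon = \negro$ is symmetric under $\blanco\leftrightarrow\negro$, $(2,1)\leftrightarrow(1,2)$. Setting $u = \mu(m_{\bar\xi}) \in \C(n-1)_{n-3}$, Proposition \ref{bncomp} gives
$$
\d(u^\blanco) \;=\; (\d u)^\blanco + (-1)^{n-3}\bigl((2,1)\circ_1 u - u\circ_{n-1}(2,1)\bigr),
$$
and by induction $\d u = \mu(\partial m_{\bar\xi})$. On the other side I analyse $\mu(\partial m_{\bar\xi\blanco})$ by classifying each decomposition $\xi'\circ_i\xi'' = \bar\xi\blanco$ according to where the final letter of $\bar\xi\blanco$ sits. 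In \emph{Case A} ($i\leq p-1$) it is the last letter of $\xi'$, so $\xi' = \alpha\blanco$; in \emph{Case B} ($i = p$) it is the last letter of $\xi''$, so $\xi'' = \beta\blanco$. If $\alpha$ (respectively $\beta$) is empty, this is a boundary decomposition contributing $\pm(2,1)\circ_1 u$ (respectively $\pm u\circ_{n-1}(2,1)$) after applying $\mu$; otherwise removing the distinguished $\blanco$ yields a genuine decomposition of $\bar\xi$, with Case B forcing $j = r$. Proposition \ref{bncomp2} tells us that $(\mu(\partial m_{\bar\xi}))^\blanco$ distributes exactly into these non-boundary contributions: its first formula handles the terms with $j < r$, while its second formula produces both a Case A and a Case B term from each $j = r$ decomposition of $\bar\xi$.

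The main obstacle is sign bookkeeping. I must check that the sign $(-1)^{j-1}(-1)^{s(r-j)}$ attached to a decomposition of $\bar\xi$, multiplied by the $(-1)^{s-2}$ appearing in Proposition \ref{bncomp2} (with $\ell = s - 2$), equals the sign $(-1)^{j-1}(-1)^{s(r+1-j)}$ attached to the corresponding Case A decomposition of $\bar\xi\blanco$ (for which $\tilde p = r+1,\ \tilde q = s,\ \tilde i = j$); this follows from $s(r-j) + s - 2 \equiv s(r+1-j)\pmod 2$. The Case B sign computation is easier because $j = r$ kills the middle factor on both sides. Finally, the boundary signs $(-1)^{n-1}$ and $(-1)^{n}$ dictated by $\partial m_{\bar\xi\blanco}$ for the degenerate Case A and Case B terms must match $\pm(-1)^{n-3}$ from Proposition \ref{bncomp}, which they do since $n-3 \equiv n-1 \pmod 2$. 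Once these parity identities are in place, the induction closes and the theorem follows.
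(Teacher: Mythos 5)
Your proposal is correct and takes essentially the same route as the paper: freeness reduces the problem to checking $\d\mu=\mu\partial$ on generators, which you prove by induction on the length of $\xi$, using Proposition~\ref{bncomp} (termwise) for the $\d\mu(m_{\xi\blanco})$ side and Proposition~\ref{bncomp2} together with the classification of decompositions of $\xi\blanco$ into the two boundary terms $\blanco\circ_1\xi$, $\xi\circ_n\blanco$ and the lifted decompositions $\xi'\blanco\circ_i\xi''$, $\xi'\circ_p\xi''\blanco$ for the $\mu\partial(m_{\xi\blanco})$ side — exactly the content of the paper's Lemma~\ref{mupartial}. Your sign identities (the parity check $s(r-j)+s-2\equiv s(r+1-j)$ and the matching of the boundary signs with $(-1)^{k}$ from Proposition~\ref{bncomp}) agree with the paper's computation, so there is no gap.
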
 
\begin{proof}
Since $\A^{(2)}_\infty$ is free as a graded operad, we need only show $\mu$ commutes with the differentials,  which will follow inductively from the relations
\begin{eqnarray*}
\d \mu (m_{\xi\blanco}) - ( \d \mu (m_\xi))^\blanco
%& = \mu(m_\blanco) \circ_1 \mu(m_\xi) - \mu( m_\xi) \circ_n \mu(m_\blanco)
&\;\;=\;\;  \mu \partial (m_{\xi\blanco}) +\big( \mu  \partial (m_\xi) \big)^\blanco \\
\d \mu (m_{\xi\negro}) -( \d \mu (m_\xi) )^\negro 
%& = \mu(m_\negro) \circ_1 \mu(m_\xi) - \mu( m_\xi) \circ_n \mu(m_\negro)
&\;\; =\;\; \mu \partial  (m_{\xi\negro}) +\big( \mu  \partial (m_\xi) \big)^\negro 
\end{eqnarray*}
We prove these in the following Lemma.
\end{proof}

\begin{lema}\label{mupartial}
If $\xi \in \{\negro, \blanco\}^{n-1}$ 
then
\begin{eqnarray*}
\d \big(\mu(m_{\xi\blanco})\big) -  {\big(\d( \mu(m_{\xi})\big)}^\blanco  & = &(-1)^{n} \Big( \mu(m_\blanco) \circ_1 \mu(m_{\xi}) -  \mu(m_{\xi}) \circ_{n} \mu(m_\blanco) \Big) \\
\d \big(\mu(m_{\xi\negro})\big) -  {\big(\d( \mu(m_{\xi})\big)}^\negro  & = & (-1)^{n} \Big(\mu(m_\negro) \circ_1 \mu(m_{\xi}) -  \mu(m_{\xi}) \circ_{n} \mu(m_\negro) \Big)
\\
\mu\partial (m_{\xi \blanco}) - (\mu\partial m_\xi)^\blanco  & = & (-1)^{n} \Big( \mu(m_\blanco) \circ_1 \mu(m_{\xi}) -  \mu(m_{\xi}) \circ_{n} \mu(m_\blanco) \Big) \\
\mu\partial (m_{\xi \negro}) - (\mu\partial m_\xi)^\negro  & = & (-1)^{n} \Big(\mu(m_\negro) \circ_1 \mu(m_{\xi}) -  \mu(m_{\xi}) \circ_{n} \mu(m_\negro) \Big)
\end{eqnarray*}
\end{lema}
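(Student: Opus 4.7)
The plan is to dispatch the first two equations immediately from Proposition~\ref{bncomp}, and to prove the third by a term-by-term expansion of $\partial(m_{\xi\blanco})$ which, after applying $\mu$ and Proposition~\ref{bncomp2}, reduces to $(\mu\partial m_\xi)^\blanco$ plus two distinguished ``edge'' summands; the fourth equation is symmetric with $\blanco\leftrightarrow\negro$.

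For the first two equations, note that by the inductive definition of $\mu$ and Definition~\ref{ubw}, every cactus appearing in $\mu(m_\xi)\in\C(n)_{n-2}$ is produced by iterating $(\cdot)^\blanco$ and $(\cdot)^\negro$, each of which appends a fresh top-labelled lobe. Hence $\mu(m_\xi)^{-1}(\{n\})$ is always a singleton and Proposition~\ref{bncomp} applies linearly to $u=\mu(m_\xi)$. Since $\mu(m_{\xi\blanco})=\mu(m_\xi)^\blanco$ by definition and $k=n-2$ gives $(-1)^k=(-1)^n$, the second equation of Proposition~\ref{bncomp} is exactly the first equation of the lemma, and symmetrically for $\negro$.

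For the third equation, expand $\partial(m_{\xi\blanco})$ via (\ref{partialxi}) and classify each summand $m_{\zeta'}\circ_i m_{\zeta''}=m_{\xi\blanco}$, with $m_{\zeta'}\in\Atwo(N)$ and $m_{\zeta''}\in\Atwo(M)$, according to where the terminal letter $\blanco$ of $\xi\blanco$ lies in $\zeta'\circ_i\zeta''$: it is the last entry of $\zeta'$ exactly when $i\leq N-1$, forcing $\zeta'=\eta'\blanco$ (type A), and the last entry of $\zeta''$ when $i=N$, forcing $\zeta''=\eta''\blanco$ (type B). Isolate the two distinguished summands in which the ``reduced'' factor $\eta'$ or $\eta''$ is the empty word: these are $(-1)^n m_\blanco\circ_1 m_\xi$ (with $N=2$, $i=1$) and $(-1)^{n-1}m_\xi\circ_n m_\blanco$ (with $M=2$, $i=n$), which under $\mu$ produce exactly the two terms on the right-hand side of the lemma.

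The remaining summands are in bijection with the decompositions $\xi=\xi'\circ_i\xi''$ of $\partial(m_\xi)$: each such decomposition contributes a type-(A) term (with $\eta'=\xi'$, $\zeta''=\xi''$, for any $1\leq i\leq P$), and those with $i=P$ contribute an additional type-(B) term (with $\zeta'=\xi'$, $\eta''=\xi''$). The sign of the type-(A) summand differs from that of $\xi'\circ_i\xi''$ in $\partial(m_\xi)$ by a factor $(-1)^Q$ (coming from $P\to P+1$ in the exponent $Q(P-i)$), which is exactly cancelled by the sign $(-1)^{|\mu(m_{\xi''})|}=(-1)^Q$ of the first equation of Proposition~\ref{bncomp2} whenever $i\leq P-1$, producing the correct term of $(\mu\partial m_\xi)^\blanco$. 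For $i=P$ the second equation of Proposition~\ref{bncomp2} yields an extra summand proportional to $\mu(m_{\xi'})\circ_P\mu(m_{\xi''})^\blanco$, which is precisely cancelled by the type-(B) contribution at the same decomposition of $\xi$ (whose sign coincides with that in $\partial(m_\xi)$ since $(-1)^{Q(P-P)}=1$), again leaving the correct $i=P$ term of $(\mu\partial m_\xi)^\blanco$. The main obstacle throughout is the sign bookkeeping, in particular the delicate cancellation at $i=P$ between the second term of Proposition~\ref{bncomp2} and the type-(B) contribution.
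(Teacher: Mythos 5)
Your proposal is correct and follows essentially the same route as the paper: the first two equations by applying Proposition~\ref{bncomp} termwise to $\mu(m_\xi)$ (using that the top label occurs once in each term, and $(-1)^{n-2}=(-1)^n$), and the third and fourth by classifying the decompositions of $\xi\blanco$ into the four types $\blanco\circ_1\xi$, $\xi\circ_n\blanco$, $\xi'\blanco\circ_i\xi''$ and $\xi'\circ_p\xi''\blanco$ and matching signs against $(\mu\partial m_\xi)^\blanco$ via Proposition~\ref{bncomp2}. The sign bookkeeping you describe (the factor $(-1)^{q}$ for $i<p$ and the pairing of the extra $i=p$ summand with the type-(B) term) agrees with the paper's computation.
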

\begin{proof} 
The first two equations follow by applying Proposition~\ref{bncomp} to each cactus $u$ which appears as a term of $\mu(m_\xi)$. 

For $\xi'\in\{\blanco,\negro\}^{p-1}$,  $\xi''\in\{\blanco,\negro\}^{q-1}$, $p,q\geq2$, we observe that
$$
(\mu (m_{\xi'}\circ_i m_{\xi''})
)^\blanco=\begin{cases}\mu \bigl((-1)^qm_{\xi'\blanco}\circ_i m_{\xi''}\bigr)&(i<p)
\\ \mu \bigl((-1)^qm_{\xi'\blanco}\circ_p m_{\xi''}+m_{\xi'}\circ_p m_{\xi''\blanco})\bigr)
&(i=p)\end{cases}
$$
by applying Proposition~\ref{bncomp2} to each term $u'\circ_i u''$ of 
$\mu m_{\xi'}\circ_i \mu m_{\xi''}$.
Hence
$$
(\mu\partial m_\xi)^\blanco=\mu\biggl(\sum (-1)^{q(p-i+1)+i-1}
m_{\xi'\blanco}\circ_i m_{\xi''}\;+\;(-1)^{p-1}m_{\xi'}\circ_pm_{\xi''\blanco}\biggl)
$$
where the sum is over all decompositions $\xi=\xi'\circ_i\xi''$, $1\leq i\leq p$ as in~\eqref{partialxi}. 
On the other hand, the possible decompositions of $\xi \blanco$ are ${\blanco} \circ_1 \xi$, $\xi \circ_n {\blanco}$, ${\xi'\blanco} \circ_i \xi''$ and 
$\xi' \circ_p {\xi''\blanco}$ with $\xi', \xi''$ a decomposition of $\xi$ as before, and 
in the formula for $\partial (m_{\xi\blanco})$ these four types of decomposition appear with the following signs:
$$
(-1)^n
m_{\blanco}\circ_1m_\xi,
\;
(-1)^{n-1}
m_\xi\circ_nm_{\blanco},
\;
(-1)^{q(p-i+1)+i-1}
m_{\xi'\blanco}\circ_im_{\xi''},
\;
(-1)^{p-1}
m_\xi'\circ_pm_{\xi''\blanco}.$$
We therefore obtain the third equation
$$\mu\partial m_{\xi \blanco} - (\mu\partial m_\xi)^\blanco  \;\; = \;\; (-1)^n \;( \mu m_\blanco \circ_1 \mu m_\xi -  \mu m_\xi \circ_{n} \mu m_\blanco ) $$
as required. The computations for the fourth equation are identical.
\end{proof} 

\bigskip

We end with a calculation of $\mu(m_\xi)$ in arities 3, 4 and 5. We omit all commas from the notation.
\bigskip

\begin{tabular}{lcl}
$m_\xi%\; (\xi\in\{\blanco,\negro\}^{n-1})
$ &  maps to & a linear combination of $u\in\C'_n$\\
\hline
$m_{\negro \negro\dots}$   & $\mapsto$  &  $0$\\
$m_{\blanco \blanco\dots }$ & $\mapsto$  &  $0$\\
\hline
$m_{\negro \blanco}$  & $\mapsto$  &  $\phantom{-} (1312)$ \\
$m_{\blanco \negro }$ & $\mapsto$  &  $-(2131)$ \\
\hline
$m_{\negro \blanco \negro}$  & $\mapsto$  &  $- (131412) - (131242) $ \\
$m_{\negro \blanco \blanco}$ & $\mapsto$  &  $- (141312)$ \\
$m_{\blanco \negro \negro}$  & $\mapsto$  &  $+ (213141)$ \\
$m_{\blanco \negro \blanco}$ & $\mapsto$  &  $+ (242131)  + (214131)$ \\
\hline
$m_{\negro \blanco \negro \negro}$  & $\mapsto$  &  $+ (13141512) + (13141252) + (13124252)$  \\
$m_{\negro \blanco \negro \blanco}$ & $\mapsto$  &  $- (15131412) + (13531412) + (13151412) - (15131242) $ \\
 & & $ + (13531242) + (13151242) + (13125242)$ \\
$m_{\negro \blanco \blanco \negro}$ & $\mapsto$  & $+ (14131252) + (14131512) + (14135312) - (14151312) $ \\
$m_{\negro \blanco \blanco \blanco}$& $\mapsto$  & $- (15141312)$ \\
$m_{\blanco \negro \negro \negro}$  & $\mapsto$  & $  - (21314151) $ \\
$m_{\blanco \negro \negro \blanco}$ & $\mapsto$  & $ + (25213141) + (21513141) - (21353141) - (21315141) $ \\
$m_{\blanco \negro \blanco \negro}$ & $\mapsto$  & $  + (24252131) + (24215131) - (24213531) - (24213151)$ \\
& & $ + (21415131) - (21413531) - (21413151)$ \\
$m_{\blanco \negro \blanco \blanco}$& $\mapsto$  & $ + (25242131)  + (25214131) + (21514131) $ \\
\end{tabular}

\section*{Acknowledgements.} 
The first and third authors were partially supported by Spanish Ministry of Science and Innovation grants
MTM2010-20692 %natalia
and
MTM2010-15831 %casac
and by the Generalitat de Catalunya under the grants
SGR-1092-2009 %aguade
and
SGR-119-2009. %carles
They also thank the Isaac Newton Institute for Mathematical Sciences, where this paper was completed during the programme on Grothendieck--Teichm\"uller Groups, Deformation and Operads.
The second author would like to thank the       Universitat Polit\`ecnica de Catalunya  and the Universidad de Barcelona
for their hospitality.
\bibliographystyle{plain}
\bibliography{GLT}
\end{document}